\newtheorem{theorem}{Theorem}
\newtheorem{lemma}{Lemma}
\begin{document}
{\selectlanguage{english}
\binoppenalty = 10000 %
\relpenalty   = 10000 %

\pagestyle{headings} \makeatletter
\renewcommand{\@evenhead}{\raisebox{0pt}[\headheight][0pt]{\vbox{\hbox to\textwidth{\thepage\hfill \strut {\small Grigory. K. Olkhovikov}}\hrule}}}
\renewcommand{\@oddhead}{\raisebox{0pt}[\headheight][0pt]{\vbox{\hbox to\textwidth{{On imagination logic}\hfill \strut\thepage}\hrule}}}
\makeatother

\title{An axiomatic system for STIT imagination logic}
\author{Grigory K. Olkhovikov}
\date{}
\maketitle

\begin{quote}
{\bf Abstract.} We formulate a Hilbert-style axiomatic system for
STIT logic of imagination recently proposed by H. Wansing in
\cite{Wansing} and prove its completeness by the method of
canonical models.
\end{quote}

\begin{quote}
{\bf Keywords}: STIT logic, logic of imagination, canonical
models, completeness, axiomatization
\end{quote}

We assume a propositional language with a countably infinite set
$Var$ of propositional variables and the following set of
modalities:

(1) $SA$ understood as `$A$ is settled true'; the dual modality is
$PA$ understood `$A$ is possible'.

(2) $[c]_aA$ understood as `agent $a$ $cstit$-realizes $A$'; the
other action modality, namely, $[d]_aA$ to be read `agent $a$
$dstit$-realizes $A$', is in this setting a defined one with the
following definition: $[c]_aA \wedge \neg SA$.

(3) $I_aA$ understood as `agent $a$ imagines that $A$'.

Among other things, all the agent indices are assumed to stand for
pairwise different agents.

For these modalities we assume the following
`stit-plus-neighborhood' semantics originally defined by H.
Wansing in \cite{Wansing}.

An imagination model is a tuple $\mathcal{M} = \langle Tree, \leq,
Ag, Choice, \{ N_a\mid a \in Ag \}, V\rangle$, where:
\begin{itemize}
\item $Tree$ is a non-empty set of moments, and $\leq$ is a
partial order on $Tree$ such that
$$\forall m_1,m_2\exists m(m \leq m_1 \wedge m \leq m_2),$$
and
$$\forall m_1,m_2,m((m_1 \leq m \wedge m_2 \leq m) \to (m_1 \leq m_2 \vee m_2 \leq m_1)).$$

\item The set $History$ of all histories of $\mathcal{M}$ is then
just a set of all maximal $\leq$-chains in $Tree$. A history $h$
is said to pass through a moment $m$ iff $m \in h$. The set of all
histories passing through $m \in Tree$ is denoted by $H_m$.

\item $Ag$ is a finite set of all agents acting in $Tree$ and is
assumed to be disjoint from all the other items in $\mathcal{M}$.

\item $Choice$ is a function defined on the set $Tree \times Ag$,
such that for an arbitrary $\langle m, a\rangle \in Tree \times
Ag$, we the value of this function, that is to say $Choice(m,a)$
(more commonly denoted $Choice^m_a$) is a partition of $H_m$.  If
$h \in H_m$, then $Choice^m_a(h)$ denotes the element of
$Choice^m_a$, to which $h$ belongs. In the special case when we
have $Choice^m_a = \{ H_m \}$, it is said that the agent $a$ has a
\emph{vacuous} choice at the moment $m$. In our models, $Choice$
is assumed to satisfy the following two restrictions:
\begin{itemize}
\item ``No choice between undivided histories'': for arbitrary
$m\in Tree$, $a \in Ag$, $e\in Choice^m_a$, and $h,h' \in H_m$:
$$(h \in e \wedge \exists m'(m < m' \wedge m' \in h \cap h')) \to h' \in e.$$

\item ``Independence of agents''. If $f$ is a function defined on
$Ag$ such that  $\forall a \in Ag(f(a) \in Choice^a_m)$, then
$\bigcap_{a \in Ag}f(a) \neq \emptyset$.
\end{itemize}

\item The set of moment-history pairs in $\mathcal{M}$, that is to
say, the set
$$MH(\mathcal{M}) = \{ \langle m, h\rangle \mid m \in
Tree,  h \in H_m \}$$

is then to be used as a set of points, where formulas are
evaluated.

\item For every $a \in Ag$, we have $N_a: MH(\mathcal{M}) \to
2^{(2^{MH(\mathcal{M})})}$. $N_a$ is thus a neighborhood function,
defining, for every moment history pair $m/h$ the set of
propositions imagined by the agent $a$ at the moment $m$ in
history $h$.

\item $V$ is an evaluation function for atomic sentences, that is
to say, $V: Var \to 2^{MH(\mathcal{M})}$.
\end{itemize}

The relation of satisfaction of sentences in the above defined
language by moment-history pairs in $\mathcal{M}$ is then defined
inductively as follows:
\begin{align*}
\mathcal{M}, m/h &\vDash p \Leftrightarrow m/h \in V(p), &&\text{
for atomic $p$;}\\
\mathcal{M}, m/h &\vDash (A \wedge B) \Leftrightarrow \mathcal{M},
m/h \vDash A \wedge \mathcal{M}, m/h \vDash B;\\
\mathcal{M}, m/h &\vDash \neg A \Leftrightarrow \mathcal{M}, m/h
\not\vDash A;\\
\mathcal{M}, m/h &\vDash SA \Leftrightarrow \forall h' \in
H_m(\mathcal{M}, m/h' \vDash A);\\
\mathcal{M}, m/h &\vDash [c]_aA \Leftrightarrow \forall h' \in
Choice^m_a(h)(\mathcal{M}, m/h' \vDash A);\\
\mathcal{M}, m/h &\vDash I_aA \Leftrightarrow \forall h' \in
Choice^m_a(h)(\{ m/h \in MH(\mathcal{M}) \mid \mathcal{M}, m/h
\vDash A \} \in N_a(m/h')) \wedge\\
&\qquad\qquad\qquad\qquad\wedge\exists h'' \in H_m((\{ m/h \in
MH(\mathcal{M}) \mid \mathcal{M}, m/h \vDash A \} \notin
N_a(m/h''))).
\end{align*}

For this logic we propose the following axiomatization:

(A0) Propositional tautologies.

(A1) $S$ is an $S5$ modality.

(A2) For every $a \in Ag$, $[c]_a$ is an $S5$ modality.

(A3) $SA \to [c]_aA$ for every $a \in Ag$.

(A4) $(P[c]_{a_1}A_1 \wedge\ldots \wedge P[c]_{a_n}A_n) \to
P([c]_{a_1}A_1 \wedge\ldots \wedge[c]_{a_n}A_n)$, provided that
all the $a_1\ldots a_n$ are pairwise different.

(A5) $I_aA \to ([c]_aI_aA \wedge \neg SI_aA)$ for every $a \in
Ag$.

Rules are as follows:

(R1) Modus ponens.

(R2) From $A$ infer $SA$.

(R3) From $A \leftrightarrow B$ infer $I_aA \leftrightarrow I_aB$
for every $a \in Ag$.

\textbf{Note}. Thus the proposed axiomatization is just the
axiomatization of $dstit$ logic proposed by Ming Xu plus
axiomatization of the logic of $I_a$ as a minimal neighborhood
modal system \textbf{E} plus the special axiom (A5) stating the
action character of the imagination operator. Note also that the
converse of (A5) easily follows from (A2), so that we actually
have a biconditional here.

Our aim now is to get a strong completeness theorem for this
system $L$ with respect to the above semantics, in the following
form: if $\Theta$ is an $L$-consistent set of sentences, then
$\Theta$ has a model in your proposed semantics.

In what follows we will always use `consistency' to mean
`$L$-consistency' and we let $\vdash$ stand for a relation of
$L$-derivability.

In order to get the main theorem, we use the technique of
canonical models, which is an adaptation of the corresponding
techniques for the two respective parts of our system as mentioned
in the Note above. In particular, we draw on \cite[ch. 17]{FF} in
many matters relevant to the purely STIT part of the following
construction.

More precisely, we let $W$ to be the set of all $L$-maxiconsistent
sets of sentences and we denote the members of $W$ as $w$, $w'$,
$w_1$ etc. We set $wRw'$ iff $\{A \mid SA \in w \} \subseteq w'$,
and we set $w\simeq_aw'$ iff $\{A \mid [c]_aA \in w \} \subseteq
w'$. By standard modal logic, (A1) and (A2) ensure that all these
relations are relations of equivalence; moreover, (A3) ensures
that $\simeq_a \subseteq R$ for every $a \in Ag$.

Indeed, let $w\simeq_aw'$ and let $SA \in w$. By (A3) and
maxiconsistency of $w$, we get $[c]_aA \in w$, whence by
$w\simeq_aw'$ we get that $A \in w'$. Since $A$ was arbitrary,
this means that $wRw'$.

In what follows, we will be denoting equivalence classes of $W$
with respect to $R$ by $X$, $X'$, $X_1$, etc. The set of all such
equivalence classes will be denoted by $\Xi$. When restricted to
an arbitrary $X \in \Xi$, the relation $R$ turns into a universal
relation, but relations of the form $\simeq_a$ can remain
non-trivial equivalences breaking $X$ up into several equivalence
classes. We will denote the family of equivalence classes
corresponding to $\simeq_a\upharpoonright X$ by $E(X,a)$.

Among the elements of $W$, we have a special interest in the
maxiconsistent sets extending the following set of formulas:
$$
\Sigma = \{ \neg p \mid p \in Var \} \cup \{ SA \leftrightarrow A
\mid \text{ for arbitrary }A \} \cup \{ [c]_aA \leftrightarrow A
\mid \text{ for arbitrary }A \}.
$$

The following facts are worth noting:

(F1) There exists exactly one element in $W$, which extends
$\Sigma$. We will denote this element by $\mathbf{w}$. Indeed, one
easily sees that $\Sigma$ pre-determines every Boolean formula by
fixing the literals. The modalities $S$ and $[c]_a$ are then just
vacuous in virtue of the definition of $\Sigma$. Finally, every
maxiconsistent set extending $\Sigma$ will have to contain $\neg
I_aA$ for every formula $A$ and every $a \in Ag$. For suppose
otherwise. Then for some $w \in W$ such that $\Sigma \subseteq w$,
for some formula $A$ and for some $a \in Ag$ we will have $I_aA
\in w$. Then, by (A5) and maxiconsistency of $w$ we will get $\neg
SI_aA \in w$. Therefore, by definition of $\Sigma$ and
maxiconsistency of $w$, we will get $\neg I_aA \in w$, which
contradicts the assumption that $w \in W$. Therefore, the
statements with $I_a$-modalities are also fixed for every $w \in
W$, for which $\Sigma \subseteq w$. It is also easy to see that
such a maxiconsistent $w$ extending $\Sigma$ must exist, since
$\Sigma$ itself is obviously consistent\footnote{$\Sigma$ is
satisfiable and thus consistent. Indeed, consider a model
consisting of a single moment, where every agent has a vacuous
choice, every set of imagination neighborhoods is empty and every
variable valuation is empty as well.}

(F2) It follows from the definitions of $\Sigma$ and $R$ that the
$R$-equivalence set containing $\mathbf{w}$, contains $\mathbf{w}$
only. We will denote this equivalence set by $\mathbf{X}$.

We now proceed to the definition of our canonical model. First, we
choose\footnote{We also assume, with the view of the definition of
$\leq$ below, that $0$ is not an element of any element of $\Xi
\cup W$.} an element $0 \notin \Xi \cup W$ and define our set of
moments:
$$Tree = \{ 0 \} \cup \Xi \cup W.$$

We then set the following partial order on $Tree$. For arbitrary
$x,y \in Tree$ we have $x \leq y$ iff $x = y$, or $y \in x$ or $x
= 0$. This allows for a simple description of the set of histories
in our frame. Every history turns out to have the form $h_w =
\langle 0, X, w\rangle$, where $X \in \Xi$ and $w \in X$. Thus,
our set of histories is in one-to-one correspondence with $W$.

Thirdly, we define the choice function. It assigns a vacuous
choice to every agent at every moment $m$, if $m \notin \Xi$. That
is to say, the only choice of every agent at every such moment
will be just the set of all histories passing through this moment.
Otherwise, i.e. for the case when $m = X \in \Xi$, we define the
choice function as follows:
$$Choice^a_X = \{ H \mid \exists e \in E(X,a)(H = \{ h_w \mid w \in e \}) \}.$$

Next, we need to define the imagination neighborhoods. We do this
in the following way. $N_a(m/h) = \emptyset$ for every $a \in Ag$
and every $m \notin \Xi$. For the case when $m = X \in \Xi$, we
need one further auxiliary notion. For every sentence $A$ we set
$Ext(A)$ (read: extension of $A$) to be $\{ X/h_w\mid w\in X
\wedge A\in w \}$ if $A \notin \mathbf{w}$; otherwise we set
$$Ext(A) = \{ X/h_w\mid w\in X \wedge A\in w \} \cup \{ m/h_w\mid m \notin \Xi \wedge m \in h_w \}.$$
Having defined the extensions, we set
$$N_a(X/h_w) = \{ Ext(A) \mid I_aA \in w \}$$
for arbitrary $w \in X \in \Xi$.

Finally, we define the evaluation function for variables in the
following way:
$$V(p) = \{ X/h_w\mid w\in X \in \Xi \wedge p\in w \}.$$

We need to show that the canonical model $\mathcal{M}$ defined
above is the model of our logic. The semantic restrictions are
mostly seen to hold immediately; in particular, the
no-choice-between-undivided-histories restriction holds because we
only have undivided histories at the moment $0$, where only
vacuous choices are allowed. The only exception is the
independence-of-agents restriction, which we treat below.

\begin{lemma}[On Independence]\label{independence}
Let $m \in Tree$ and let $f$ be a function on $Ag$ such that
$\forall a \in Ag(f(a) \in Choice^a_m)$. Then $\bigcap_{a \in
Ag}f(a) \neq \emptyset$.
\end{lemma}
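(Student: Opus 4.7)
The plan is to case-split on the type of moment $m \in Tree$. For $m = 0$ or $m = w \in W$ the canonical construction assigns a vacuous choice to every agent, so $Choice^a_m = \{H_m\}$ and $f(a)$ must equal $H_m$ for every $a \in Ag$. Hence $\bigcap_{a \in Ag} f(a) = H_m$, which is nonempty (at $m = 0$ every $h_w$ is a history through $0$, and at $m = w$ we have $h_w \in H_w$). The substantive case is therefore $m = X \in \Xi$.

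Assume then $m = X \in \Xi$. By the definition of $Choice^a_X$, each $f(a)$ takes the form $\{h_w \mid w \in e_a\}$ for a unique $\simeq_a$-equivalence class $e_a \in E(X,a)$; pick any representative $w_a \in e_a$. It suffices to exhibit a single $w \in X$ satisfying $w \simeq_a w_a$ for every $a \in Ag$, since then $h_w \in f(a)$ for every $a$ at once.

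To produce such a $w$, I will Lindenbaum-extend the set
$$\Theta = \bigcup_{a \in Ag} \{A \mid [c]_a A \in w_a\}.$$
The crucial step is showing that $\Theta$ is consistent. Given a finite $\Theta_0 \subseteq \Theta$, I group its members by witnessing agent, forming for each $a \in Ag$ a finite conjunction $C_a$ of those elements of $\Theta_0$ contributed by $w_a$; closure of $[c]_a$ under conjunction (it is $S5$) gives $[c]_a C_a \in w_a$. Fix any $w_0 \in X$. Because $R$ restricted to $X$ is universal we have $w_0 R w_a$, and I claim $P[c]_a C_a \in w_0$: otherwise $S\neg [c]_a C_a \in w_0$ would force $\neg [c]_a C_a \in w_a$, contradicting $[c]_a C_a \in w_a$. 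Axiom (A4), applicable because the elements of $Ag$ are pairwise distinct, then yields $P\bigwedge_{a \in Ag}[c]_a C_a \in w_0$; this formula is consistent (since $w_0$ is), and invoking the $T$-axiom for each $[c]_a$ shows $\bigwedge_{a \in Ag} C_a$ is consistent, whence so is $\bigwedge \Theta_0$.

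Let $w$ be any maxiconsistent extension of $\Theta$. By construction $\{A \mid [c]_a A \in w_a\} \subseteq w$, that is $w_a \simeq_a w$; by symmetry of $\simeq_a$ also $w \simeq_a w_a$. Since $\simeq_a \subseteq R$ and $w_a \in X$, we have $w \in X$, so $h_w$ is well-defined and lies in every $f(a)$, which completes the argument. The main obstacle is the consistency claim for $\Theta$: it is the only place where axiom (A4) does real work, and requires a somewhat delicate combination with the $S5$ behaviour of each $[c]_a$ and the collapse of $R\upharpoonright X$ to the universal relation.
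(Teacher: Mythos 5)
Your proof is correct and follows essentially the same route as the paper's: both reduce to the substantive case $m = X \in \Xi$, use (A4) together with the universality of $R$ on $X$ and the $S5$ behaviour of the $[c]_a$ to show that the union of the choice-determining sets is consistent, and then Lindenbaum-extend to obtain a world lying in every chosen cell. The only cosmetic differences are that the paper adjoins the $S$-theory $\Delta$ of $X$ to the set being extended in order to place the extension in $X$ (you instead derive $w \in X$ from $\simeq_a \subseteq R$), and that it works with the boxed formulas $[c]_aA$ rather than their unboxed counterparts; both variants go through.
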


\begin{proof}
If $m \notin \Xi$, then the statement of the Lemma is obvious,
since every agent will have a vacuous choice. We treat the case,
when $m = X \in \Xi$. Consider a function $f$ as described in
Lemma. For every $f(a)$ we fix $e_{f(a)} \in E(X,a)$ such that
$f(a) = \{ h_w \mid w \in e_{f(a)} \}$ and we fix, further, an
arbitrary $w_{f(a)} \in e_{f(a)}$. Since $e_{f(a)}$ is an
$\simeq_a$-equivalence class, there is a set $\Gamma_{f(a)}$ of
sentences of the form $[c]_aA$ shared by all the members of
$e_{f(a)}$ and only those members. Also, since $X$ is an
$R$-equivalence class, there is a set $\Delta$ of sentences of the
form $SA$ shared by all (and only) members of $X$. Consider, then,
the following set of sentences:
$$\Lambda = (\bigcup_{a \in Ag}\Gamma_{f(a)}) \cup \Delta.$$

We claim that $\Lambda$ is consistent. Assume otherwise. In this
case $\Lambda$ contains a finite inconsistent subset. Given that
$S$ and $[c]_a$ are $S5$-modalities, we can assume that this
inconsistent subset has the following form:
$$SB, [c]_{a_1}A_1,\ldots, [c]_{a_n}A_n,$$
where all the $a_1\ldots a_n$ are pairwise different (and
moreover, $Ag = \{ a_1\ldots a_n \}$). We know, further, that for
all $1 \leq i \leq n$ we have $SB, [c]_{a_i}A_i \in w_{f(a_i)}$.
So, choose an arbitrary $w \in X$. For every $1 \leq i \leq n$ we
have $w_{f(a_i)}Rw$, therefore, we must also have $P[c]_{a_i}A_i
\in w$ for every $1 \leq i \leq n$. Indeed, if it were otherwise,
we would have $S\neg [c]_{a_i}A_i \in w$ since $w$ is
maxiconsistent. But then, given that $wRw_{f(a_i)}$, we would have
$\neg [c]_{a_i}A_i \in w_{f(a_i)}$, a contradiction.

Thus, we have in fact that
$$P[c]_{a_1}A_1\wedge\ldots \wedge P[c]_{a_n}A_n \in w,$$
therefore, by (A4), we also have
$$P([c]_{a_1}A_1\wedge\ldots \wedge [c]_{a_n}A_n) \in w.$$
This, in turn, means that the set
$$\{ A \mid SA \in \Delta \} \cup \{ [c]_{a_1}A_1\wedge\ldots \wedge [c]_{a_n}A_n \}$$
is consistent: otherwise, we would have that
$$\{ A \mid SA \in \Delta \} \vdash \neg([c]_{a_1}A_1\wedge\ldots \wedge [c]_{a_n}A_n),$$
and, by standard modal $S5$-reasoning, that
$$\Delta \vdash S\neg([c]_{a_1}A_1\wedge\ldots \wedge [c]_{a_n}A_n),$$
which, given that $w \in X$ and hence $\Delta \subseteq w$, would
mean inconsistency of $w$, a contradiction.

Therefore, we may choose an arbitrary maxiconsistent $w'$
extending $\{ A \mid SA \in \Delta \} \cup \{
[c]_{a_1}A_1\wedge\ldots \wedge [c]_{a_n}A_n \}$, and by the fact
that this set contains $\{ A \mid SA \in \Delta \}$ we know that
$wRw'$ and thus $w' \in X$ and further $SB \in w'$. This means
that our finite subset in fact has a model and is not
inconsistent. Therefore, since the finite set was arbitrary,
$\Lambda$ is consistent as well. Consider, then, an arbitrary
maxiconsistent $w''$ extending $\Lambda$. Since $\Delta \subseteq
w''$, we have $w'' \in X$, and since $\Gamma_{f(a)} \subseteq w''$
for arbitrary $a \in Ag$, we have $w''\simeq_a w_{f(a)}$ for every
such $a$. This means, in turn, that $w'' \in e_{f(a)}$ for every
$a \in Ag$, and so $h_{w''} \in \bigcap_{a \in Ag}f(a) \neq
\emptyset$.
\end{proof}

By now, the only ingredient to be added is the Truth Lemma; we
divide it into two parts as follows.

\begin{lemma}[Truth Lemma 1]\label{truthlemma1}
Let $m \notin \Xi$ and $m \in h$. Then, for any sentence $A$, the
following holds:
$$\mathcal{M}, m/h \vDash A \Leftrightarrow A \in \mathbf{w}.$$
\end{lemma}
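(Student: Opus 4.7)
The plan is to proceed by induction on the complexity of $A$, after unpacking what $m \notin \Xi$ means for the available histories. If $m = 0$ then $H_m = \{h_w \mid w \in W\}$, while if $m = w \in W$ then the only history containing $w$ is $h_w$, so $H_m = \{h_w\}$. The key observation making the induction run is that every history $h'$ through such an $m$ still yields a pair $m/h'$ with the same $m \notin \Xi$, so the inductive hypothesis remains applicable at the intermediate evaluation points produced by the modal clauses.

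For the base case $A = p \in Var$, both sides are false: the semantic side because $V(p) \subseteq \{X/h_w \mid X \in \Xi\}$ by definition excludes any pair with $m \notin \Xi$, and the syntactic side because $\neg p \in \Sigma \subseteq \mathbf{w}$ by (F1). The Boolean cases for $\wedge$ and $\neg$ then propagate routinely from the inductive hypothesis together with the maxiconsistency of $\mathbf{w}$.

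The cases for $SA$ and $[c]_aA$ I would handle in parallel. On the syntactic side, $\mathbf{w}$ contains both $SA \leftrightarrow A$ and $[c]_aA \leftrightarrow A$ (since these are in $\Sigma$), so $SA \in \mathbf{w}$ and $[c]_aA \in \mathbf{w}$ each reduce to $A \in \mathbf{w}$. On the semantic side, the choice function assigns a vacuous choice at every $m \notin \Xi$, so $Choice^m_a(h) = H_m$, and both clauses reduce to $\forall h' \in H_m(\mathcal{M}, m/h' \vDash A)$. Applying the inductive hypothesis to each such $m/h'$, this universal statement holds iff $A \in \mathbf{w}$; here it is essential that even when $m = 0$ and $H_m$ ranges over all $h_w$, the moment of evaluation is still $0 \notin \Xi$.

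The case for $I_aA$ is the most direct: by construction $N_a(m/h') = \emptyset$ for every $m \notin \Xi$ and every $h' \in H_m$, and since $Choice^m_a(h)$ is non-empty (it contains $h$), the universal conjunct in the semantic clause for $I_a$ fails, so $\mathcal{M}, m/h \not\vDash I_aA$. Dually, (F1) already establishes $\neg I_aA \in \mathbf{w}$ for every $A$, so $I_aA \notin \mathbf{w}$ as well. I do not expect any real obstacle; the only point requiring attention is confirming in the $S$ and $[c]_a$ cases that the intermediate moments remain outside $\Xi$, which is built into the setup.
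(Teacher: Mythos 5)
Your proof is correct and takes essentially the same route as the paper's: induction on $A$, with the atomic case settled by the definitions of $V$ and $\Sigma$, the $S$ and $[c]_a$ cases reduced via vacuous choices and the biconditionals in $\Sigma$ to the inductive hypothesis applied at all $m/h'$ with $m \notin \Xi$, and the $I_a$ case dismissed outright. The only (immaterial) divergence is in the $I_a$ case, where you use $N_a(m/h') = \emptyset$ to refute the universal conjunct, while the paper observes that vacuous choices make the two conjuncts of the satisfaction clause for $I_a$ jointly unsatisfiable; both are valid one-line arguments.
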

\begin{proof}
We use induction on the construction of $A$. If $A = p \in Var$,
then $A \notin \mathbf{w}$, and also $m/h \notin V(A)$, since $m
\notin \Xi$. Therefore, $\mathcal{M}, m/h \not\vDash A$.

The boolean cases are then trivial.

If $A = SB$, then $\mathcal{M}, m/h \vDash A$ iff $\mathcal{M},
m/h' \vDash B$ for every $h'$ such that $m \in h'$ iff $A \in
\mathbf{w}$ by induction hypothesis (since we have proved IH for
arbitrary $h$ going through $m$).

If $A = [c]_aB$, then $\mathcal{M}, m/h \vDash A$ iff
$\mathcal{M}, m/h' \vDash B$ for every $h'$ such that $m \in h'$
and $h' \in Choice^a_m(h)$ iff $A \in \mathbf{w}$ by induction
hypothesis (cf. the commentary on the previous case).

If $A = I_aB$, then $A \notin \mathbf{w}$ by (F1). We also have
$\mathcal{M}, m/h \not\vDash A$, since, given that $m \notin \Xi$,
all the choices at $m$ are vacuous.
\end{proof}

\begin{lemma}[Truth Lemma 2]\label{truthlemma2}
Let $X \in \Xi$ and $w \in X$. Then, for any sentence $A$, the
following holds:
$$\mathcal{M}, X/h_w \vDash A \Leftrightarrow A \in w.$$
\end{lemma}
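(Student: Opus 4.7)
The plan is to proceed by induction on the construction of $A$. The atomic case is immediate from the definition of $V$, and the boolean cases are routine. For $A = SB$, I would use that $H_X = \{h_{w'} \mid w' \in X\}$, so that $\mathcal{M}, X/h_w \vDash SB$ reduces by IH to $B \in w'$ for every $w' \in X$; since $X$ is an $R$-equivalence class and $R$ is the canonical accessibility relation associated with $S$, standard modal reasoning gives $SB \in w$. The case $A = [c]_aB$ is analogous, with $Choice^a_X(h_w)$ corresponding by construction to the $\simeq_a$-equivalence class of $w$ inside $X$.

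The case $A = I_aB$ is where the real work lies. I would first observe that, combining the inductive hypothesis for $B$ with Lemma \ref{truthlemma1}, the set of pairs satisfying $B$ in $\mathcal{M}$ coincides exactly with $Ext(B)$: the case split in the definition of $Ext$ (the extra clause triggered when $B \in \mathbf{w}$) mirrors the dichotomy between Lemmas \ref{truthlemma1} and \ref{truthlemma2}. Given this, the satisfaction clause for $I_aB$ at $X/h_w$ unfolds to (i) for every $w' \in X$ with $w \simeq_a w'$ there is $C$ with $I_aC \in w'$ and $Ext(C) = Ext(B)$, and (ii) there is $w'' \in X$ such that no $C$ with $I_aC \in w''$ satisfies $Ext(C) = Ext(B)$.

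The crux is now the auxiliary fact $Ext(B) = Ext(C) \Leftrightarrow \vdash B \leftrightarrow C$, which I expect to be the main obstacle. The $\Leftarrow$ direction is immediate from maxiconsistency. For $\Rightarrow$ I would argue contrapositively: if $\not\vdash B \leftrightarrow C$, then one of $\{B, \neg C\}$ or $\{\neg B, C\}$ is consistent, and a maxiconsistent extension $w^*$ of, say, the former produces a point in $Ext(B) \setminus Ext(C)$. If $w^* \neq \mathbf{w}$, the pair $X^*/h_{w^*}$ separates the two extensions via the first clause of $Ext$; if $w^* = \mathbf{w}$, then $B \in \mathbf{w}$ while $C \notin \mathbf{w}$, so the second clause of $Ext(B)$ contains every $0/h_{w'}$ whereas $Ext(C)$ contains none of them.

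With this fact in hand, the $\Leftarrow$ direction of the $I_aB$ case goes through by applying (A5) to $I_aB \in w$: the conjunct $[c]_aI_aB \in w$ gives $I_aB \in w'$ for every $w' \simeq_a w$, hence (i) from the definition of $N_a$, while $\neg SI_aB \in w$ supplies some $w'' \in X$ with $I_aB \notin w''$ for which (ii) must hold, since otherwise $Ext(B) = Ext(C)$ for some $I_aC \in w''$ would force $\vdash B \leftrightarrow C$ and hence $I_aB \in w''$ via (R3). The $\Rightarrow$ direction is a one-step application of (i) at $w' = w$: it yields some $C$ with $I_aC \in w$ and $Ext(C) = Ext(B)$, whence the key fact together with (R3) deliver $I_aB \in w$.
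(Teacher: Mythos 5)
Your proof is correct and follows essentially the same route as the paper: induction on $A$, the standard canonical-model arguments for $S$ and $[c]_a$, the identity $Ext(B)=\{m/h\mid \mathcal{M},m/h\vDash B\}$ via Lemma \ref{truthlemma1}, and axiom (A5) for the two conjuncts of the $I_a$ clause. The one place you go beyond the paper is the auxiliary fact $Ext(B)=Ext(C)\Leftrightarrow\;\vdash B\leftrightarrow C$ together with (R3): the paper asserts $Ext(B)\notin N_a(X/h_{w''})$ "by definition of $N_a$", silently assuming that no other $C$ with $I_aC\in w''$ can have $Ext(C)=Ext(B)$, and your Lindenbaum-style argument is exactly the right way to discharge that implicit step.
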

\begin{proof}
Again, we use induction on the construction of $A$. Atomic case we
have by definition of $V$, and the boolean cases are obvious. We
consider the modal cases.

Let $A = SB$, and assume that $SB \in w$. Then take any $h_{w'}$
passing through $X$. In the context of $\mathcal{M}$ this means
that $w' \in X$, which in turn means that $wRw'$. Therefore, we
have $B \in w'$ and, by induction hypothesis, $\mathcal{M},
X/h_{w'} \vDash B$. Since $h_{w'}$ was arbitrary, this means that
$\mathcal{M}, X/h_w \vDash SB$.

On the other hand, assume that $SB \notin w$. This means that the
set
$$\alpha = \{ C \mid SC \in w \} \cup \{ \neg B \}$$
is consistent. Indeed, otherwise we would have
$$\{ C \mid SC \in w \} \vdash B,$$
and further, by standard $S5$ reasoning
$$\{ SC \mid SC \in w \} \vdash SB,$$
and so, given, maxiconsistency of $w$, we would have $SB \in w$,
contrary to our assumption. Therefore, consider an arbitrary $w'
\in W$ extending $\alpha$. By definition, $w' \in X$, therefore
$h_{w'}$ goes through $X$ and we have, by induction hypothesis,
that $\mathcal{M}, X/h_{w'} \not\vDash B$.

Let $A = [c]_aB$, and let $[c]_aB \in w$. Then take any $h_{w'}$
such that $h_{w'} \in Choice^a_X(h_w)$. In the context of
$\mathcal{M}$ this means that $w\simeq_a w'$. Therefore, we have
$B \in w'$ and, by induction hypothesis, $\mathcal{M}, X/h_{w'}
\vDash B$. Since $h_{w'}$ was arbitrary, this means that
$\mathcal{M}, X/h_w \vDash [c]_aB$.

On the other hand, assume that $[c]_aB \notin w$. This means that
the set
$$\beta = \{ C \mid [c]_aC \in w \} \cup \{ \neg B \}$$
is consistent. Indeed, otherwise we would have
$$\{ C \mid [c]_aC \in w \} \vdash B,$$
and further, by standard $S5$ reasoning
$$\{ [c]_aC \mid [c]_aC \in w \} \vdash [c]_aB,$$
and so, given, maxiconsistency of $w$, we would have $[c]_aB \in
w$, contrary to our assumption. Therefore, consider an arbitrary
$w' \in W$ extending $\beta$. By definition, $w'\simeq_aw$, and
also $w' \in X$ given that $\simeq_a \subseteq R$. Therefore
$h_{w'}$ goes through $X$ and moreover $h_{w'} \in
Choice^a_X(h_w)$. By induction hypothesis, we have that
$\mathcal{M}, X/h_{w'} \not\vDash B$, and so, putting all
together, that $\mathcal{M}, X/h_w \not\vDash [c]_aB$

Let $A = I_aB$. First of all, note that by induction hypothesis
and Lemma \ref{truthlemma1} we have the following biconditional:
\begin{equation}\label{1}
Ext(B) = \{ m/h \mid \mathcal{M}, m/h \vDash B \}.
\end{equation}

Now, assume that $I_aB \in w$. Then, by (A5), we also have
$[c]_aI_aB \in w$ and $\neg SI_aB \in w$. Take any $h_{w'}$ such
that $h_{w'} \in Choice^a_X(h_w)$. In the context of $\mathcal{M}$
this means that $w\simeq_a w'$. Therefore, we have $I_aB \in w'$.
By definition of $N_a$, this means that $Ext(B) \in
N_a(X/h_{w'})$. On the other hand, the fact that $\neg SI_aB \in
w$ means that the set
$$\gamma = \{ C \mid SC \in w \} \cup \{ \neg I_aB \}$$
is consistent. Indeed, otherwise we would have
$$\{ C \mid SC \in w \} \vdash I_aB,$$
and further, by standard $S5$ reasoning
$$\{ SC \mid SC \in w \} \vdash SI_aB,$$
and so, given, maxiconsistency of $w$, we would have $SI_aB \in
w$, contrary to our assumption. Therefore, consider an arbitrary
$w'' \in W$ extending $\gamma$. By definition, $w'' \in X$ so that
$h_{w''}$ goes through $X$, and we have $Ext(B) \notin
N_a(X/h_{w''})$ by definition of $N_a$.

Putting all this together, we get that, by \eqref{1}, $\{ m/h \mid
\mathcal{M}, m/h \vDash B \} \in N_a(X/h_{w'})$ for every $h_{w'}
\in Choice^a_X(h_w)$ and  $\{ m/h \mid \mathcal{M}, m/h \vDash B
\} \notin N_a(X/h_{w''})$ for some $h_{w'}$ going through $X$.
That is to say, we get that $\mathcal{M}, X/h_w \vDash I_aB$.

On the other hand, if $I_aB \notin w$, then, of course, $Ext(B)
\notin N_a(X/h_w)$, and given the fact that $h_w \in
Choice^a_X(h_w)$ and the biconditional \eqref{1}, we get that
$\mathcal{M}, X/h_w \not\vDash I_aB$ immediately.
\end{proof}

Now we are ready for our main result.

\begin{theorem}
Let $\Theta$ be a consistent set of sentences. Then $\Theta$
has a model.
\end{theorem}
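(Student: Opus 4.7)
The plan is to reduce the theorem to a standard Lindenbaum-plus-Truth-Lemma argument, since essentially all the hard work has been already absorbed into the construction of the canonical model $\mathcal{M}$, the Independence Lemma, and the two Truth Lemmas.

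First, I would invoke a Lindenbaum-style extension: since $\Theta$ is $L$-consistent, extend it to some maxiconsistent set $w \in W$ with $\Theta \subseteq w$. This requires only standard reasoning, using the fact that $L$ is a finitary Hilbert system, so consistency is a property of finite subsets and Zorn's lemma (or a standard enumeration-based construction) applies.

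Second, I would locate $w$ inside the canonical model. Let $X \in \Xi$ be the unique $R$-equivalence class with $w \in X$, and consider the history $h_w = \langle 0, X, w\rangle$, which by construction passes through the moment $X$. This gives a point $X/h_w \in MH(\mathcal{M})$ of the canonical model. Since the semantic frame conditions on $\mathcal{M}$ have already been verified (in particular, the Independence of Agents via Lemma \ref{independence}, and no-choice-between-undivided-histories by construction), $\mathcal{M}$ is a legitimate imagination model.

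Third, I would apply Truth Lemma 2 (Lemma \ref{truthlemma2}) to the point $X/h_w$: for every formula $A$, we have $\mathcal{M}, X/h_w \vDash A$ iff $A \in w$. In particular, since $\Theta \subseteq w$, every $A \in \Theta$ is satisfied at $X/h_w$, so $\mathcal{M}$ is a model of $\Theta$ as required. There is no genuine obstacle at this stage; the only thing to double-check is that the Lindenbaum extension is available for our system, but this is automatic from the standard finitary-Hilbert-calculus argument and does not interact in any nontrivial way with the neighborhood-based or STIT-based parts of the axiomatization.
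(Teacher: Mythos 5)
Your proof is correct and follows essentially the same route as the paper: extend $\Theta$ to a maxiconsistent $w$ by a Lindenbaum argument, take the point $X/h_w$ in the canonical model, and apply Lemma \ref{truthlemma2}. The only difference is that you make the Lindenbaum step and the legitimacy of $\mathcal{M}$ explicit, which the paper leaves implicit.
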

\begin{proof}
Consider any maxiconsistent set $w$ extending $\Theta$ and its
corresponding $R$-equivalence class $X$. Then, by Lemma
\ref{truthlemma2}, we have $\mathcal{M}, X/h_w \vDash \Theta$.
\end{proof}
We also get \textbf{compactness} of $L$ as a standard consequence
of strong completeness.

 }
\end{document}